\newtheorem{theorem}{Theorem}
\newtheorem{corollary}{Corollary}
\theoremstyle{definition}
\newtheorem{example}{Example}
\begin{document}

\begin{center}
{\Huge Boundary length of reconstructions in discrete tomography}

\bigskip

{\Large Birgit van Dalen}

\textit{Mathematisch Instituut, Universiteit Leiden, Niels Bohrweg 1, 2333 CA Leiden, The Netherlands \\ dalen@math.leidenuniv.nl}

\today

\end{center}

{\small \textbf{Abstract:} We consider possible reconstructions of a binary image of which the row and column sums are given. For any reconstruction we can define the length of the boundary of the image. In this paper we prove a new lower bound on the length of this boundary. In contrast to simple bounds that have been derived previously, in this new lower bound the information of both row and column sums is combined.}

\section{Introduction}\label{introduction}

An important problem in discrete tomography is to reconstruct a binary image on a lattice from given projections in lattice directions \cite{boek, boeknieuw}. Each point of a binary image
has a value equal to zero or one. The line sum of a line through the image is the sum of the
values of the points on this line. The projection of the image in a certain direction
consists of all the line sums of the lines through the image in this direction. Any binary image with exactly the same projections as the original image we call a \emph{reconstruction} of the image.

For any set of more than two directions, the problem of reconstructing a binary image from its projections in those directions is NP-complete \cite{gardner}. For exactly two directions, the horizontal and vertical ones, say, it is possible to reconstruct an image in polynomial time. Already in 1957, Ryser described an algorithm to do so \cite{ryser}. He also characterised the set of projections that correspond to a unique binary image.

If there are multiple images corresponding to one set of line sums, it is interesting to reconstruct an image with a special property. In order to find reconstructions that look rather like a real object, two special properties in particular are often imposed on the reconstructions. The first is \emph{connectivity} of the points with value one in the picture \cite{hv-convex2,hv-convex3,woeginger}. The second is \emph{hv-convexity}: if in each row and each column, the points with value one form one connected block, the image is called \emph{hv-convex}. The reconstruction of hv-convex images, either connected or not necessarily connected, has been studied extensively \cite{hv-convex1, hv-convex2, hv-convex3, dahlflatberg, woeginger}.

Another relevant concept in this context is the \emph{boundary} of a binary image. The boundary can be defined as the pairs consisting of two adjacent points, one with value 0 and one with value 1. Here we use 4-adjacency: that is, a point is adjacent to its two vertical and to its two horizontal neighbours \cite{connectivity}. The number of such pairs of adjacent points with two different values is called the \emph{length of the boundary} or sometimes the \emph{perimeter length} \cite{gray}.

In this paper we will consider given line sums that may correspond to more than one binary image. Since the boundary of real objects is often small compared to the area, it makes sense to look for reconstructions of which the length of the boundary is as small as possible. In particular, if there exists an hv-convex reconstruction, then the length of the boundary of that image is the smallest possible. In that sense, the length of the boundary is a more general concept than hv-convexity.

The question we are interested in in this paper is: given line sums, what is the smallest length of the boundary that a reconstruction fitting those line sums can have? We can give two straightforward lower bounds on the length of the boundary, given the row and column sums. Both are equivalent to bounds given by Dahl and Flatberg in \cite[Section 2]{dahlflatberg}.

The first is that every column with a non-zero sum contributes 2 to the length of the horizontal boundary, while every row with non-zero sum contributes 2 to the length of the vertical boundary. So if there are $m$ non-zero row sums and $n$ non-zero column sums, then the total length of the boundary is at least $2n+2m$.

For the second bound we use that if the row sums of two consecutive rows are different, the length of the horizontal boundary between those rows is at least the absolute difference between those row sums. A similar result holds for the column sums and the vertical boundary. So if an image has row sums $r_1$, $r_2$, \ldots, $r_m$ and column sums $c_1$, $c_2$, \ldots, $c_n$, then the length of the boundary is at least
\[
r_1 + \sum_{i=1}^{m-1} |r_i - r_{i+1}| + r_m + c_1 + \sum_{j=1}^{n-1} |c_j - c_{j+1}| + c_n.
\]

Despite being simple, these bounds are sharp in many cases. For example, the first bound is sharp if and only if there exists a hv-convex image that satisfies the line sums. On the other hand it is clear that much information is disregarded in these bounds. The first bound does not use the actual value of the non-zero line sums at all, while the second bound only uses the column sums to estimate the length of the vertical boundary and only the row sums to estimate the length of the horizontal boundary.

In this paper we prove a new lower bound on the length of the boundary that combines the row and column sums. After introducing some notation in Section \ref{notation}, we prove this bound in Section \ref{main}. Some examples and a corollary are in Section \ref{examples}. Finally, in Section \ref{variation} we derive an extension of the bound that gives better results in certain cases.

\section{Definitions and notation}\label{notation}

Let $F$ be a finite subset of $\mathbb{Z}^2$ with characteristic function $\chi$. (That is, $\chi(x,y) = 1$ if $(x,y) \in F$ and $\chi(x,y) = 0$ otherwise.) For $i \in \mathbb{Z}$, we define \emph{row} $i$ as the set $\{(x,y) \in \mathbb{Z}^2: x = i\}$. We call $i$ the index of the row. For $j \in \mathbb{Z}$, we define \emph{column} $j$ as the set $\{(x,y) \in \mathbb{Z}^2: y = j\}$. We call $j$ the index of the column. Following matrix notation, we use row numbers that increase when going downwards and column numbers that increase when going to the right.

The \emph{row sum} $r_i$ is the number of elements of $F$ in row $i$, that is $r_i = \sum_{j \in \mathbb{Z}} \chi(i,j)$. The \emph{column sum} $c_j$ of $F$ is the number of elements of $F$ in column $j$, that is $c_j = \sum_{i \in \mathbb{Z}} \chi(i,j)$. We refer to both row and column sums as the \emph{line sums} of $F$. We will usually only consider finite sequences $\mathcal{R} = (r_1, r_2, \ldots, r_m)$ and $\mathcal{C} = (c_1, c_2, \ldots, c_n)$ of row and column sums that contain all the nonzero line sums.

Given sequences of integers $\mathcal{R} = (r_1, r_2, \ldots, r_m)$ and $\mathcal{C} = (c_1, c_2, \ldots, c_n)$, we say that $(\mathcal{R}, \mathcal{C})$ is \emph{consistent} if there exists a set $F$ with row sums $\mathcal{R}$ and column sums $\mathcal{C}$. Define $b_i = \#\{j: c_j \geq i\}$ for $i = 1, 2, \ldots, m$. Ryser's theorem \cite{ryser} states that if $r_1 \geq r_2 \geq \ldots \geq r_m$, the line sums $(\mathcal{R}, \mathcal{C})$ are consistent if and only if for each $k = 1, 2, \ldots, m$ we have $\sum_{i=1}^k b_i \geq \sum_{i=1}^k r_i$. From this we can conclude a similar result for the case of not necessarily non-increasing row sums: if the line sums $(\mathcal{R}, \mathcal{C})$ are consistent, then for all $k = 1, 2, \ldots, m$ we have
\begin{equation}\label{ryserconsequence}
\sum_{i=1}^k b_i \geq \sum_{i=1}^k r_i.
\end{equation}
The converse clearly does not hold.

We can view the set $F$ as a picture consisting of cells with zeroes and ones. Rather than $(i,j) \in F$, we might say that $(i,j)$ has value 1 or that there is a one at $(i,j)$. Similarly, for $(i,j) \not\in F$ we sometimes say that $(i,j)$ has value zero or that there is a zero at $(i,j)$.

We define the \emph{boundary} of $F$ as the set consisting of all pairs of points $\big( (i,j), (i',j') \big)$ such that
\begin{itemize}
\item $i=i'$ and $|j-j'| =1$, or $|i-i'| = 1$ and $j=j'$, and
\item $(i,j) \in F$ and $(i',j') \not\in F$.
\end{itemize}
One element of this set we call \emph{one piece of the boundary}. We can partition the boundary into two subsets, one containing the pairs of points with $i=i'$ and the other containing the pairs of points with $j=j'$. The former set we call the \emph{vertical boundary} and the latter set we call the \emph{horizontal boundary}. We define the \emph{length of the (horizontal, vertical) boundary} as the number of elements in the (horizontal, vertical) boundary.

\section{The main theorem}\label{main}

\begin{theorem}\label{altgrens}
Let row sums $\mathcal{R} = (r_1, r_2, \ldots, r_m)$ and column sums $\mathcal{C} = (c_1, c_2, \ldots, c_n)$ be given, where $r_1=n$, $r_m = 0$. Let $L_h$ be the total length of the horizontal boundary of an image with line sums $(\mathcal{R}, \mathcal{C})$. Define $b_i = \#\{j: c_j \geq i\}$ and $d_i = b_i - r_i$ for $i = 1, 2, \ldots, m$. For any integer $t \geq 0$ and any subset $\{i_1, i_2, \ldots, i_{2t+1} \} \subset \{1, 2, \ldots, m\}$ with $i_1 < i_2 < \ldots < i_{2t+1}$ we have
\begin{align}
L_h &\geq 2n + d_{i_1} - d_{i_2} + d_{i_3} - \cdots - d_{i_{2t}} + 2d_{i_{2t+1}}, \label{altgrens1} \\
L_h &\geq 2n - d_{i_{2t+1}} + d_{i_{2t}} - d_{i_{2t-1}} + \cdots + d_{i_2} - 2d_{i_1} \label{altgrens2}.
\end{align}
\end{theorem}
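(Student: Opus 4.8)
The plan is to reduce the inequality to an independent estimate for each column and then prove that estimate by counting blocks. The basic identity is $L_h = 2\sum_j k_j$, where $k_j$ is the number of maximal vertical runs of ones (\emph{blocks}) in column $j$: each block is bordered above and below by a cell of value $0$ (possibly lying outside the picture), contributing exactly two horizontal boundary pieces, and different blocks contribute different pieces. Since $r_1 = n$, row $1$ meets all $n$ columns in a one, so every column is nonempty and $k_j \ge 1$; this already yields $L_h \ge 2n$, the quantity the theorem refines.

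First I would split the correction terms over the columns. Writing $b_i = \sum_j \mathbf{1}[c_j \ge i]$ and $r_i = \sum_j \chi(i,j)$, and putting $e_s = \mathbf{1}[c_j \ge i_s] - \chi(i_s,j)$ for a fixed column $j$, the correction in \eqref{altgrens1} becomes $\sum_j \delta_j$ with $\delta_j = \sum_{s=1}^{2t+1}(-1)^{s+1} e_s + e_{2t+1}$, while the correction in \eqref{altgrens2} becomes $\sum_j \delta_j'$ with $\delta_j' = -\sum_{s=1}^{2t+1}(-1)^{s+1} e_s - e_1$. By linearity it then suffices to prove the per-column inequalities $2k_j \ge 2 + \delta_j$ and $2k_j \ge 2 + \delta_j'$; summing over the $n$ columns reproduces \eqref{altgrens1} and \eqref{altgrens2} exactly.

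The key point in proving, say, $2k_j \ge 2 + \delta_j$ is the monotonicity $\mathbf{1}[c_j \ge i_1] \ge \cdots \ge \mathbf{1}[c_j \ge i_{2t+1}]$. It produces a threshold $S$ with $e_s = 1 - \chi(i_s,j) \ge 0$ for $s \le S$ and $e_s = -\chi(i_s,j) \le 0$ for $s > S$, which forbids the worst case in which all signed summands of $\delta_j$ are simultaneously positive. Pairing consecutive terms rewrites $\delta_j = \sum_{l=1}^{t}(e_{2l-1} - e_{2l}) + 2e_{2t+1}$. For the lower bound on $k_j$ I would read off the column values at the rows $1, i_1, \ldots, i_{2t+1}$, prepending the guaranteed one in row $1$: their number $\rho$ of one-runs satisfies $\rho \le k_j$, and if $c_j \ge i_{2t+1}$ while $\chi(i_{2t+1},j) = 0$, then rows $1, \ldots, i_{2t+1}$ contain at most $i_{2t+1} - 1$ ones, so a one lies strictly below row $i_{2t+1}$ and produces a further block, giving $k_j \ge \rho + 1$. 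It then remains to check that each paired term $e_{2l-1} - e_{2l}$, together with the boundary term $2e_{2t+1}$, is accounted for by the block transitions its sign pattern forces, so that $\delta_j \le 2(k_j - 1)$.

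The main obstacle is this final bookkeeping: converting the alternating sign sum into a guaranteed number of blocks while tracking the parity of each run's position, the parity of the threshold $S$, and the boundary term $2e_{2t+1}$, which is precisely where the extra block forced by a large column sum is consumed. The delicate feature, consistent with \eqref{ryserconsequence}, is that one column may be deficient in many rows yet force only a single additional block, so the charging must be carried out globally along the column rather than row by row. Inequality \eqref{altgrens2} follows in the same manner from the companion estimate $2k_j \ge 2 + \delta_j'$, whose proof is the mirror image of the above: the boundary term is now $-2e_1$, and the extra block is forced near row $1$ when $c_j < i_1$ while $\chi(i_1,j) = 1$.
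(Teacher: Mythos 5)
Your framework is in substance the paper's own proof. The paper establishes \eqref{altgrens1} by induction on the number of columns, and its inductive step (inequality \eqref{altgrenstoprove}) is exactly your per-column estimate $2k_j \geq 2 + \delta_j$ applied to the last column: the paper's $2B$ is your $2k_n$, and its increment $d_i - d_i'$ is your $e_s$. Unrolling that induction into a single sum over columns, as you do, is legitimate and arguably cleaner, and your reductions are all correct: $L_h = 2\sum_j k_j$, the splitting $d_{i_s} = \sum_j e_s^{(j)}$, and the sufficiency of the column-local bound.

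The gap is that the column-local bound is announced rather than proved: the inequality $\delta_j \leq 2(k_j-1)$ \emph{is} the theorem, and everything you write before it is setup. You have assembled the right ingredients (the threshold $S$, the pairing, the bound $\rho \leq k_j$ on runs of a subsequence, the extra block forced when $c_j \geq i_{2t+1}$ and $\chi(i_{2t+1},j)=0$), but the case analysis combining them is where the content lies and it is not routine. One must observe that at most one pair $(i_{2l-1},i_{2l})$ can straddle the threshold and contribute $+2$; that when it does contribute $+2$ it is itself a $0$-to-$1$ pair, so it consumes one of the at most $k_j-1$ upward transitions, while then $2e_{2t+1} \leq 0$ compensates; and that when $k_j=1$ the column is a prefix of ones, so every $e_s$ vanishes and $\delta_j = 0 = 2(k_j-1)$ holds with no slack at all. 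This is precisely the paper's four-case analysis, and your proof is incomplete without it. Separately, your claim that \eqref{altgrens2} is the ``mirror image'' of \eqref{altgrens1} is too quick: the guaranteed one in row $1$ breaks the up--down symmetry, so the extra block forced near the top interacts differently with the transition count. The paper avoids redoing the case analysis by applying \eqref{altgrens1} to the complemented image $\bar F$, for which $\bar d_i = -d_{m+1-i}$ and the horizontal boundary length is unchanged; I would recommend that route over a second ad hoc bookkeeping argument.
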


\begin{proof}
First we prove (\ref{altgrens1}) by induction on $n$. In the initial case $n=0$ we have $d_i = b_i = r_i = 0$ for all $i$, hence we have to prove that $L_h \geq 0$, which is obviously true.

Now let $n \geq 1$ and consider a binary image $F$ with line sums $(\mathcal{R}, \mathcal{C})$. Let $I \subset \{1, 2, \ldots, m\}$ be the set of indices $i$ such that cell $(i,n)$ has value 1. Note that $\# I = c_n$. Let $F'$ be the binary image we obtain by deleting column $n$ from $F$. Let $(r_1', r_2', \ldots, r_m')$ be the row sums of $F'$. The column sums of $F'$ are $(c_1, c_2, \ldots, c_{n-1})$, and define $b_i' = \#\{ j \leq n-1 : c_j \geq i\}$ and $d_i' = b_i' - r_i'$ for $i=1,2,\ldots, m$. We have
\[
r_i' = \begin{cases} r_i & \text{if } i \not\in I, \\ r_i-1 & \text{if } i \in I, \end{cases}
\]
\[
b_i' = \begin{cases} b_i -1 & \text{if } i \leq c_n, \\ b_i & \text{if } i > c_n, \end{cases}
\]
and therefore
\[
d_i' = \begin{cases} d_i - 1 & \text{if } i \not\in I \text{ and } i \leq c_n, \\
d_i & \text{if } i \notin I \text{ and } i > c_n, \text{ or } i \in I \text{ and } i \leq c_n, \\
d_i + 1 & \text{if } i \in I \text{ and } i > c_n. \end{cases}
\]
As induction hypothesis we assume that (\ref{altgrens1}) is true for the smaller image $F'$. So for the total length $L_h'$ of the horizontal boundary of $F'$ we have
\[
L_h' \geq 2(n-1) + d_{i_1}' - d_{i_2}' + d_{i_3}' - \cdots - d_{i_{2t}}' + 2d_{i_{2t+1}}'.
\]
Let $2B$ be equal to the horizontal boundary in column $n$ of $F$. Then $L_h = L_h' + 2B$. We want to prove (\ref{altgrens1}), hence it suffices to prove
\begin{equation}\label{altgrenstoprove}
2B - 2 \geq (d_{i_1} - d_{i_1}') - (d_{i_2} - d_{i_2}') + (d_{i_3} - d_{i_3}') - \cdots - (d_{i_{2t}} - d_{i_{2t}}') + 2(d_{i_{2t+1}} - d_{i_{2t+1}}').
\end{equation}
Write the right-hand side as
\[
\sum_{s=1}^t \left( (d_{i_{2s-1}} - d_{i_{2s-1}}') - (d_{i_{2s}} - d_{i_{2s}}') \right) + 2(d_{i_{2t+1}} - d_{i_{2t+1}}').
\]
Note that
\[
d_i - d_i' = \begin{cases} 1 & \text{if } i \not\in I \text{ and } i \leq c_n, \\
0 & \text{if } i \notin I \text{ and } i > c_n, \text{ or } i \in I \text{ and } i \leq c_n, \\
-1 & \text{if } i \in I \text{ and } i > c_n. \end{cases}
\]
The only possible values of $(d_{i_{2s-1}} - d_{i_{2s-1}}') - (d_{i_{2s}} - d_{i_{2s}}')$ are therefore $-1$, $0$, $1$ and $2$. If we have $i_{2s-1}, i_{2s} \leq c_n$ or $i_{2s-1}, i_{2s} > c_n$, then the value 2 is not possible and
\[
(d_{i_{2s-1}} - d_{i_{2s-1}}') - (d_{i_{2s}} - d_{i_{2s}}') = 1 \qquad \Leftrightarrow \qquad i_{2s-1} \not\in I \text{ and } i_{2s} \in I.
\]
Furthermore note that of the $2B$ pieces of horizontal boundary in column $n$, one is above row 1 (as $r_1 = n$, so $1 \in I$) and exactly $B-1$ are between a pair of cells with row indices $i$ and $i+1$, such that $i \not\in I$ and $i+1 \in I$. We now distinguish between four cases.

\textit{Case 1.} Suppose $i_{2t+1} \leq c_n$ and $i_{2t+1} \not\in I$. Then $2(d_{i_{2t+1}} - d_{i_{2t+1}}') = 2$. In the first $c_n$ cells of column $n$, there is at least one cell (the one with row index $i_{2t+1}$) that has value 0, hence $B \geq 2$ and there is a cell with row index greater than $i_{2t+1}$ with value 1. This means that there are at most $B-2$ pairs $(i_{2s-1}, i_{2s})$ such that $i_{2s-1} \not\in I$ and $i_{2s} \in I$. Also, $i_{2s-1}, i_{2s} \leq c_n$ for all $s$. So
\[
\sum_{s=1}^t \left( (d_{i_{2s-1}} - d_{i_{2s-1}}') - (d_{i_{2s}} - d_{i_{2s}}') \right) + 2(d_{i_{2t+1}} - d_{i_{2t+1}}') \leq (B-2) + 2 = B \leq 2B-2.
\]

\textit{Case 2.} Suppose $i_{2t+1} \leq c_n$ and $i_{2t+1} \in I$. Then $2(d_{i_{2t+1}} - d_{i_{2t+1}}') = 0$. Now there are at most $B-1$ pairs $(i_{2s-1}, i_{2s})$ such that $i_{2s-1} \not\in I$ and $i_{2s} \in I$. Also, $i_{2s-1}, i_{2s} \leq c_n$ for all $s$. So
\[
\sum_{s=1}^t \left( (d_{i_{2s-1}} - d_{i_{2s-1}}') - (d_{i_{2s}} - d_{i_{2s}}') \right) + 2(d_{i_{2t+1}} - d_{i_{2t+1}}') \leq B-1 \leq 2B-2.
\]

\textit{Case 3.} Suppose $i_{2t+1} > c_n$ and $B \geq 2$. Then $2(d_{i_{2t+1}} - d_{i_{2t+1}}') \leq 0$. Again there are at most $B-1$ pairs $(i_{2s-1}, i_{2s})$ such that $i_{2s-1} \not\in I$ and $i_{2s} \in I$. If there does not exist an $u$ such that $i_{2u-1} \leq c_n$ and $i_{2u} > c_n$, then we are done, as in the previous case. If there does exist such an $u$, then
\[
(d_{i_{2u-1}} - d_{i_{2u-1}}') - (d_{i_{2u}} - d_{i_{2u}}') = 2 \qquad \Leftrightarrow \qquad  i_{2u-1} \not\in I \text{ and } i_{2u} \in I.
\]
If $(d_{i_{2u-1}} - d_{i_{2u-1}}') - (d_{i_{2u}} - d_{i_{2u}}') = 2$, then on the right-hand side of (\ref{altgrenstoprove}) we have a 2 and at most $B-2$ times a 1. If not, then we have no 2 and at most $B$ times a 1. In both cases we find
\[
\sum_{s=1}^t \left( (d_{i_{2s-1}} - d_{i_{2s-1}}') - (d_{i_{2s}} - d_{i_{2s}}') \right) + 2(d_{i_{2t+1}} - d_{i_{2t+1}}') \leq B \leq 2B-2.
\]

\textit{Case 4.} Suppose $B=1$. Then $i \in I \Leftrightarrow i\leq c_n$, hence
\[
d_i' = d_i \qquad \text{for all } i.
\]
Therefore
\[
\sum_{s=1}^t \left( (d_{i_{2s-1}} - d_{i_{2s-1}}') - (d_{i_{2s}} - d_{i_{2s}}') \right) + 2(d_{i_{2t+1}} - d_{i_{2t+1}}') = 0 = 2B-2.
\]

In all possible cases we have now proved inequality (\ref{altgrenstoprove}), which finishes the proof of (\ref{altgrens1}).

Now we prove (\ref{altgrens2}). Let $F$ be a binary $m \times n$ image with row sums $\mathcal{R}$ and column sums $\mathcal{C}$. Define $\bar{F}$ as the binary $m \times n$ image that has zeroes where $F$ has ones and ones where $F$ has zeroes. Let $(\bar{r}_1, \ldots, \bar{r}_m)$ be the row sums of $\bar{F}$ and $(\bar{c}_1, \ldots, \bar{c}_n)$ the column sums. Define $\bar{b}_i = \# \{j: \bar{c}_j \geq i\}$ and $\bar{d}_i = \bar{b}_i - \bar{r}_{m+1-i}$ for $i = 1, 2, \ldots, m$. As $\bar{r}_i = n-r_i$ and $\bar{c}_j= m-c_j$ for all $i$ and $j$, we have
\[
\bar{b}_i = \# \{j : m-c_j \geq i\} = \# \{j : c_j \leq m-i\} = n - \# \{j: c_j \geq m+1-i\} = n-b_{m+1-i}.
\]
Hence
\[
\bar{d}_i = \bar{b}_i - \bar{r}_{m+1-i} = n-b_{m+1-i} - n + r_{m+1-i} = -d_{m+1-i}.
\]
As $\bar{r}_1 = 0$ and $\bar{r}_m = n$, we may apply (\ref{altgrens1}) to the row sums $(\bar{r}_m, \bar{r}_{m-1}, \ldots, \bar{r}_1)$. We write the subset of the row indices we use as $(m+1-i_{2t+1}, m+1-i_{2t}, \ldots, m+1-i_1)$ with $i_1 < i_2 < \ldots < i_{2t+1}$. We find that for the total length $\bar{L}_h$ of the horizontal boundary of $\bar{F}$ holds:
\begin{align*}
\bar{L}_h &\geq 2n + \bar{d}_{m+1-i_{2t+1}} - \bar{d}_{m+1-i_{2t}} + \bar{d}_{m+1-i_{2t-1}} - \cdots - \bar{d}_{m+1-i_2} + 2\bar{d}_{m+1-i_1} \\ &= 2n - d_{i_{2t+1}} + d_{i_{2t}} - d_{i_{2t-1}} + \cdots + d_{i_2} - 2d_{i_1}.
\end{align*}
In each column of $\bar{F}$, the number of horizontal pieces of boundary is equal to the number of pairs of neighbouring cells such that one cell has value 1 and the other has value 0, plus one for the boundary below row $m$. In each column of $F$, the number of horizontal pieces of boundary is equal to the number of pairs of neighbouring cells such that one cell has value 1 and the other has value 0, plus one for the boundary above row 1. As in each column the number of pairs of neighbouring cells such that one cell has value 1 and the other has value 0, is the same in $F$ and in $\bar{F}$, we have $\bar{L}_h = L_h$. Hence
\[
L_h \geq 2n - d_{i_{2t+1}} + d_{i_{2t}} - d_{i_{2t-1}} + \cdots + d_{i_2} - 2d_{i_1}.
\]

\end{proof}

\section{Some examples and a corollary}\label{examples}

To illustrate Theorem \ref{altgrens}, we apply it to two small examples.

\begin{example}\label{exaltgrens1}
Let $m=n=10$ and let row sums $(10, 7, 7, 5, 4, 3, 5, 6, 1, 0)$ and column sums $(8,8,8,8,6,3,2,2,2,1)$ be given. We compute $b_i$ and $d_i$, $i=1, 2, \ldots, 10$ as shown below.

\begin{tabular}{c|*{10}{c}}
$i$   & 1  & 2 & 3 & 4 & 5 & 6 & 7 & 8 & 9 & 10\\
\hline
$b_i$ & 10 & 9 & 6 & 5 & 5 & 5 & 4 & 4 & 0 & 0 \\
$r_i$ & 10 & 7 & 7 & 5 & 4 & 3 & 5 & 6 & 1 & 0 \\
\hline
$d_i$ & $0$ & $+2$ & $-1$ & $0$ & $+1$ & $+2$ & $-1$ & $-2$ & $-1$ & $0$
\end{tabular}

We take $t=1$, $i_1 = 2$, $i_2 = 3$ and $i_3 = 6$. Now (\ref{altgrens1}) tells us that
\[
L_h \geq 20 + 2 - (-1) + 2\cdot 2 = 27.
\]
Alternatively, we take $t=2$, $i_1=2$, $i_2=3$, $i_3=6$, $i_4=8$ and $i_5 =10$. Now \eqref{altgrens1} tells us that
\[
L_h \geq 20 + 2 - (-1) + 2 - (-2) + 2 \cdot 0 = 27.
\]
As $L_h$ must be even, we conclude $L_h \geq 28$. This bound is sharp: in Figure \ref{figaltgrens1} a binary image $F$ with the given row and column sums is shown, for which $L_h = 28$.
\end{example}

\begin{example}\label{exaltgrens2}
Let $m=n=10$ and let row sums $(10, 9, 7, 6, 8, 4, 5, 2, 3, 0)$ and column sums $(9, 8, 8, 6, 6, 4, 4, 4, 3, 2)$ be given. We compute $b_i$ and $d_i$, $i=1, 2, \ldots, 10$ as shown below.

\begin{tabular}{c|*{10}{c}}
$i$   & 1  & 2 & 3 & 4 & 5 & 6 & 7 & 8 & 9 & 10\\
\hline
$b_i$ & 10 & 10 & 9 & 8 & 5 & 5 & 3 & 3 & 1 & 0 \\
$r_i$ & 10 & 9  & 7 & 6 & 8 & 4 & 5 & 2 & 3 & 0 \\
\hline
$d_i$ & $0$ & $+1$ & $+2$ & $+2$ & $-3$ & $+1$ & $-2$ & $+1$ & $-2$ & $0$
\end{tabular}

We take $t=2$, $i_1 = 5$, $i_2 = 6$, $i_3 = 7$, $i_4=8$ and $i_5 = 9$. Now (\ref{altgrens2}) tells us that
\[
L_h \geq 20 - (-2) + 1 - (-2) + 1 - 2\cdot (-3) = 32.
\]
This bound is sharp: in Figure \ref{figaltgrens2} a binary image $F$ with the given row and column sums is shown, for which $L_h = 32$.
\end{example}

\begin{figure}
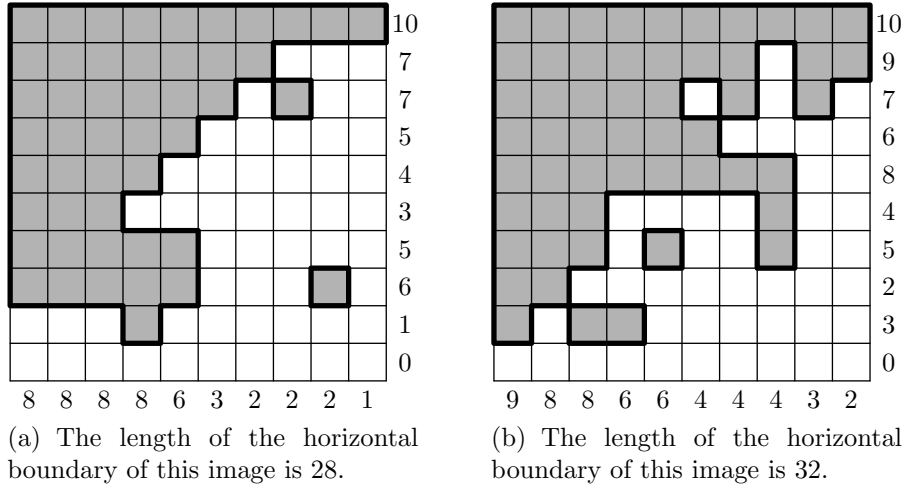

  \begin{center}
    \subfigure[The length of the horizontal boundary of this image is 28.]{\label{figaltgrens1}\includegraphics{plaatje.1}}
    \qquad
    \subfigure[The length of the horizontal boundary of this image is 32.]{\label{figaltgrens2}\includegraphics{plaatje.2}}
  \end{center}
\caption{The binary images from Examples \ref{exaltgrens1} and \ref{exaltgrens2}. The grey cells have value 1, the other cells value 0. The numbers indicate the row and column sums.}
\end{figure}

In the Introduction we mentioned two simple bounds of the length of the boundary. We recall them here, just for the horizontal boundary. The first one uses that in every column, there are at least two pieces of boundary, so if there are $n$ columns with nonzero sums, then
\begin{equation}\label{simple1}
L_h \geq 2n.
\end{equation}
The other bound computes the sum of the absolute differences between consecutive row sums, which yields
\begin{equation}\label{simple2}
L_h \geq r_1 + \sum_{i=1}^{m-1} |r_i - r_{i+1}| + r_m.
\end{equation}

In order to compare the bounds in Theorem \ref{altgrens} to these two simple bounds, we construct two families of examples.

\begin{example}\label{exaltgrens3}
Let the number of columns $n$ be even. Let $m=n+2$. Define line sums
\[
\mathcal{C} = (n, n, n-2, n-2, \ldots, 4, 4, 2, 2), \quad \mathcal{R} = (n, n-1, n-1, n-3, n-3, \ldots, 3, 3, 1, 1, 0).
\]
We calculate
\[
(b_1, b_2, \ldots, b_m) = (n, n, n-2, n-2, \ldots, 2, 2, 0, 0),
\]
\[
(d_1, d_2, \ldots, d_m) = (0, +1, -1, +1, -1, \ldots, +1, -1, +1, -1, 0).
\]
Now \eqref{altgrens1} tells us that
\[
L_h \geq 2n + \frac{n}{2} \cdot (1 - - 1) + 2\cdot 0 = 3n.
\]
On the other hand, \eqref{simple1} says $L_h \geq 2n$, while \eqref{simple2} gives
\[
L_h \geq n + 1 + \frac{n-2}{2} \cdot 2 + 1 = 2n.
\]
So Theorem \ref{altgrens} gives a much better bound in this family of examples. In fact, it is sharp: there exists a binary image with the length of the boundary equal to $3n$. Such an image is easy to construct; see for an example Figure \ref{figaltgrens3}.
\end{example}

\begin{example}\label{exaltgrens4}
Let $m=n+2$. Define line sums
\[
\mathcal{C} = (2, 2, 2, \ldots, 2, 2, 2), \quad \mathcal{R} = (n, 1, 1, 1, \ldots, 1, 1, 1, 0).
\]
We calculate
\[
(b_1, b_2, \ldots, b_m) = (n, n, 0, 0, 0, \ldots, 0, 0, 0),
\]
\[
(d_1, d_2, \ldots, d_m) = (0, +(n-1), -1, -1, -1, \ldots, -1, -1, -1, 0).
\]
Now \eqref{altgrens1} tells us that
\[
L_h \geq 2n + 2\cdot (n-1) = 4n-2.
\]
On the other hand, \eqref{simple1} says $L_h \geq 2n$, while \eqref{simple2} gives
\[
L_h \geq n + (n-1) + 1 = 2n.
\]
So again Theorem \ref{altgrens} gives a much better bound. In fact, it is sharp: there exists a binary image with the length of the boundary equal to $4n-2$. Such an image is easy to construct; see for an example Figure \ref{figaltgrens4}.
\end{example}

\begin{figure}
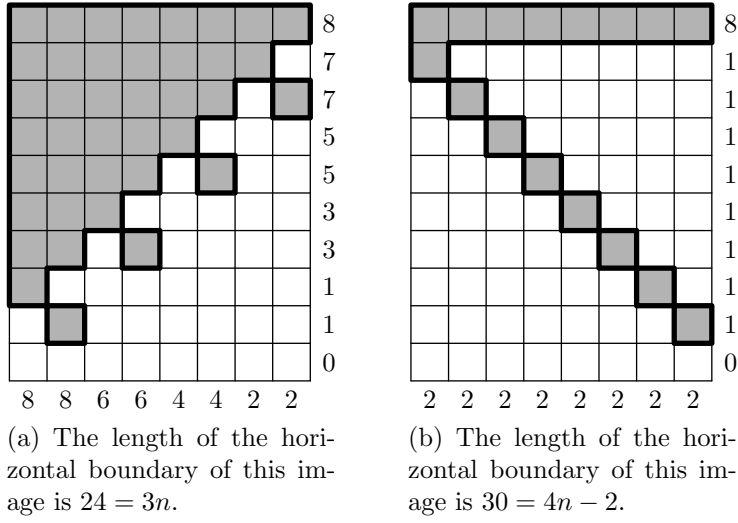

  \begin{center}
    \subfigure[The length of the horizontal boundary of this image is $24 =3n$.]{\label{figaltgrens3}\includegraphics{plaatje.15}}
    \qquad
    \subfigure[The length of the horizontal boundary of this image is $30=4n-2$.]{\label{figaltgrens4}\includegraphics{plaatje.16}}
  \end{center}
\caption{Binary images from Examples \ref{exaltgrens3} and \ref{exaltgrens4}, with $n=8$. The grey cells have value 1, the other cells value 0. The numbers indicate the row and column sums.}
\end{figure}

We can easily generalise the result from Theorem \ref{altgrens} to the case where the conditions $r_1 = n$ and $r_m = 0$ are not satisfied.

\begin{corollary}\label{corollaltgrens}
Let row sums $\mathcal{R} = (r_1, r_2, \ldots, r_m)$ and column sums $\mathcal{C} = (c_1, c_2, \ldots, c_n)$ be given. Let $L_h$ be the total length of the horizontal boundary of an image with line sums $(\mathcal{R}, \mathcal{C})$. Define $b_i = \#\{j: c_j \geq i\}$ and $d_i = b_i - r_i$ for $i = 1, 2, \ldots, m$. Also set $d_0 = d_{m+1} = 0$. For any integer $t \geq 0$ and any subset $\{i_1, i_2, \ldots, i_{2t+1} \} \subset \{0, 1, 2, \ldots, m, m+1\}$ with $i_1 < i_2 < \ldots < i_{2t+1}$ we have
\begin{align}
L_h &\geq 2r_1 + d_{i_1} - d_{i_2} + d_{i_3} - \cdots - d_{i_{2t}} + 2d_{i_{2t+1}}, \label{altgrens3} \\
L_h &\geq 2r_1 - d_{i_{2t+1}} + d_{i_{2t}} - d_{i_{2t-1}} + \cdots + d_{i_2} - 2d_{i_1} \label{altgrens4}.
\end{align}
\end{corollary}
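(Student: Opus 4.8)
The plan is to reduce Corollary \ref{corollaltgrens} to Theorem \ref{altgrens} by padding the image so that the hypotheses $r_1 = n$ and $r_m = 0$ become true. Given an image $F$ with line sums $(\mathcal{R}, \mathcal{C})$, I would build an augmented image $\hat F$ by prepending one completely filled row (all $n$ cells equal to $1$) at the top and appending one completely empty row at the bottom. Reindexing the rows of $\hat F$ as $1, 2, \ldots, m+2$, the new row sums are $\hat r_1 = n$, $\hat r_k = r_{k-1}$ for $2 \leq k \leq m+1$, and $\hat r_{m+2} = 0$, so the first row is full and the last row is empty, exactly as Theorem \ref{altgrens} requires. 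The number of columns is unchanged, $\hat n = n$.

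Next I would track how the auxiliary quantities transform. The filled row raises every column sum by $1$, so $\hat c_j = c_j + 1$ and hence $\hat b_i = \#\{j : c_j \geq i-1\} = b_{i-1}$, where we read $b_0 = n$. Combining this with the row sums gives $\hat d_i = \hat b_i - \hat r_i = d_{i-1}$ for $2 \leq i \leq m+1$, while $\hat d_1 = b_0 - n = 0$ and $\hat d_{m+2} = b_{m+1} - 0 = 0$ (the latter because every original column sum is at most $m$). These two boundary values coincide precisely with the conventions $d_0 = d_{m+1} = 0$ introduced in the statement, so uniformly $\hat d_i = d_{i-1}$ for all $i \in \{1, \ldots, m+2\}$.

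The key computational step is to relate the horizontal boundaries $\hat L_h$ and $L_h$. Working column by column, the number of horizontal boundary pieces in a column equals twice the number of maximal blocks of ones in that column. Prepending the filled row adds a one on top of each column: in a column whose original top cell already held a one this merges with the existing top block and leaves the block count unchanged, whereas in a column whose original top cell held a zero it creates one extra block, adding $2$ to that column's contribution. Since the original top row contains $r_1$ ones, there are $n - r_1$ columns of the second type, and the appended empty row contributes nothing (those cells were already zero in the lattice), so $\hat L_h = L_h + 2(n - r_1)$. I expect this boundary bookkeeping to be the only delicate point; the rest is substitution.

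Finally I would apply Theorem \ref{altgrens} to $\hat F$. For a subset $\{i_1 < \cdots < i_{2t+1}\} \subseteq \{0, 1, \ldots, m+1\}$ as in the corollary, set $j_s = i_s + 1 \in \{1, \ldots, m+2\}$ and feed $\{j_1 < \cdots < j_{2t+1}\}$ into \eqref{altgrens1} and \eqref{altgrens2}. Using $\hat d_{j_s} = d_{i_s}$ and $\hat n = n$, the right-hand sides become $2n + d_{i_1} - d_{i_2} + \cdots + 2 d_{i_{2t+1}}$ and $2n - d_{i_{2t+1}} + \cdots - 2 d_{i_1}$. Substituting $L_h = \hat L_h - 2n + 2 r_1$ cancels the $2n$ and produces the leading term $2 r_1$, yielding exactly \eqref{altgrens3} and \eqref{altgrens4}.
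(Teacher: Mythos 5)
Your proposal is correct and follows essentially the same route as the paper's proof: pad the image with a full top row and an empty bottom row, check that the quantities $d_i$ shift by one index in a way that matches the conventions $d_0 = d_{m+1} = 0$, and apply Theorem \ref{altgrens} to the padded image with shifted indices. The only difference is that you spell out the identity $L_h' = L_h + 2(n - r_1)$ via a per-column count of maximal blocks of ones, a step the paper simply asserts.
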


\begin{proof}
Let $F$ be a binary image with line sums $(\mathcal{R},\mathcal{C})$ and a horizontal boundary of total length $L_h$. Construct $F'$ by adding a row above row 1 with row sum $n$ and a row below row $m$ with row sum $0$. Let $L_h'$ be the length of the horizontal boundary of $F'$. We have $L_h' = L_h + 2(n-r_1)$. The column sums of $F'$ are $c_j' = c_j + 1$, $j = 1, 2, \ldots, n$. The row sums are $r_1' = n$, $r_i' = r_{i-1}$ for $i=2, 3, \ldots, m+1$ and $r_{m+2}' = 0$. Let $b_i' = \#\{j: c_j' \geq i\}$ and $d_i' = b_i' - r_i'$ for $i = 1, 2, \ldots, m$. Then for all $i = 2, 3, \ldots, m+1$ we have
\[
b_i' = \#\{j: c_j + 1 \geq i\} = \#\{j: c_j \geq i-1\} = b_{i-1},
\]
so $d_i' = b_{i-1} - r_{i-1} = d_{i-1}$. Also, $d_1' = d_0 =0$ and $d_{m+2}' = d_{m+1}=0$. We apply Theorem \ref{altgrens} to $F'$ with the set of indices $\{i_1+1, i_2 +1, \ldots, i_{2t+1}+1\}$ and we find
\begin{align*}
L_h' &\geq 2n + d_{i_1+1}' - d_{i_2+1}' + d_{i_3+1}' - \cdots - d_{i_{2t}+1}' + 2d_{i_{2t+1}+1}' \\ &= 2n + d_{i_1} - d_{i_2} + d_{i_3} - \cdots - d_{i_{2t}} + 2d_{i_{2t+1}},\\
L_h' &\geq 2n - d_{i_{2t+1}+1}' + d_{i_{2t}+1}' - d_{i_{2t-1}+1}' + \cdots + d_{i_2+1}' - 2d_{i_1+1}' \\ &= 2n - d_{i_{2t+1}} + d_{i_{2t}} - d_{i_{2t-1}} + \cdots + d_{i_2} - 2d_{i_1},
\end{align*}
and therefore
\begin{align*}
L_h &\geq 2r_1 + d_{i_1} - d_{i_2} + d_{i_3} - \cdots - d_{i_{2t}} + 2d_{i_{2t+1}}, \\
L_h &\geq 2r_1 - d_{i_{2t+1}} + d_{i_{2t}} - d_{i_{2t-1}} + \cdots + d_{i_2} - 2d_{i_1}.
\end{align*}
\end{proof}

\section{A variation}\label{variation}

\begin{theorem}\label{thmsplits}
Let row sums $\mathcal{R} = (r_1, r_2, \ldots, r_m)$ and column sums $\mathcal{C} = (c_1, c_2, \ldots, c_n)$ be given, where $r_1=n$, $r_m = 0$. Suppose there exists an image $F$ with line sums $(\mathcal{R}, \mathcal{C})$ and let $L_h(F)$ be the total length of the horizontal boundary of this image. Define $b_i = \#\{j: c_j \geq i\}$ and $d_i = b_i - r_i$ for $i = 1, 2, \ldots, m$. Let $k$ be an integer with $2 \leq k \leq m-1$ such that $d_k < 0$ and $d_{k+1} \geq 0$. Let $\sigma = \sum_{i=1}^k d_k$. For any integers $t, s \geq 0$ and any sets $\{i_1, i_2, \ldots, i_{2t+1} \} \subset \{1, 2, \ldots, k-1, k, m\}$ with $i_1 < i_2 < \ldots < i_{2t+1}$ and $\{\tilde i_1, \tilde i_2, \ldots, \tilde i_{2s+1}\} \subset \{1, k+1, k+2, \ldots, m-1, m\}$ with $\tilde i_1 < \tilde i_2 < \ldots < \tilde i_{2s+1}$ we have
\begin{align}
L_h(F) \geq 2n &+ d_{i_1} - d_{i_2} + d_{i_3} - \cdots - d_{i_{2t}} + 2d_{i_{2t+1}} \notag \\
 &+ d_{\tilde i_1} - d_{\tilde i_2} + d_{\tilde i_3} - \cdots - d_{\tilde i_{2s}} + 2d_{\tilde i_{2s+1}} - \sigma. \label{splits}
\end{align}
\end{theorem}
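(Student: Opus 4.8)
The plan is to prove (\ref{splits}) by cutting the image horizontally between rows $k$ and $k+1$ and applying the bounds already established in Theorem \ref{altgrens} and Corollary \ref{corollaltgrens} to the two halves. Let $F_1$ be the sub-image formed by rows $1,\ldots,k$ with an all-zero row appended below, so that its first row sum is still $n$ and its last is $0$, and let $F_2$ be the sub-image formed by rows $k+1,\ldots,m$. Since $r_m=0$, the horizontal boundary of $F$ decomposes as $L_h(F)=L_h(F_1)+L_h(F_2)-2p$, where $p=\#\{j:\chi(k,j)=\chi(k+1,j)=1\}$ counts the columns carrying a one on both sides of the cut; the correction $2p$ is exactly the gap between the true interface boundary of $F$ and the two artificial boundaries $r_k$ and $r_{k+1}$ created by separating the halves.

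Next I would record how the column-sum profiles of the halves relate to the original $d_i$. Writing $a_j$ and $e_j=c_j-a_j$ for the number of ones of column $j$ in the top and bottom halves, and setting $b_i^{(1)}=\#\{j:a_j\geq i\}$, $b_i^{(2)}=\#\{j:e_j\geq i\}$, one checks $a_j\leq\min(c_j,k)$ and $e_j\geq\max(c_j-k,0)$. Hence the top differences $d_i^{(1)}=b_i^{(1)}-r_i$ satisfy $d_i^{(1)}\leq d_i$ for $i\leq k$, while the reindexed bottom differences satisfy $d_{k+l}^{(2)}\geq d_{k+l}$. Summing gives $\sum_{i=1}^{k}(d_i-d_i^{(1)})=\sum_{l}(d_{k+l}^{(2)}-d_{k+l})=\sum_{i=1}^{k}b_i-\sum_{i=1}^{k}r_i=\sigma$, which is nonnegative by (\ref{ryserconsequence}). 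This computation explains the shape of (\ref{splits}): the top alternating sum may be taken over the original $d_i$ with $i\leq k$ (the free index $m$ standing in for the appended zero row, as $d_m=0$), the bottom sum over the original $d_i$ with $i\geq k+1$ (the free index $1$ standing in for an empty row, as $d_1=0$), and both profile discrepancies equal $\sigma$.

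I would then apply (\ref{altgrens1}) to $F_1$ with the indices $\{i_1,\ldots,i_{2t+1}\}$, and (\ref{altgrens3}) of Corollary \ref{corollaltgrens} to $F_2$ with the indices $\{\tilde i_1,\ldots,\tilde i_{2s+1}\}$, add the two resulting inequalities, and subtract the interface correction $2p$. What remains is to simplify the outcome to the right-hand side of (\ref{splits}), and here lies the main obstacle: one must show that the total loss relative to $2n+A+B$ (where $A$ and $B$ denote the two alternating sums) is exactly $\sigma$. The difficulty is that three separate effects enter with opposite signs, namely the top deficit $A^{(1)}-A\leq 0$, the bottom surplus $B^{(2)}-B\geq 0$, and the positive slack $2r_{k+1}-2p$; a crude estimate that treats them independently yields a loss of $2\sigma$ rather than $\sigma$. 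The heart of the argument is therefore a careful interface analysis tying the overlap $p$ at rows $k,k+1$ to the local profile deficits. I expect the hypotheses $d_k<0$ and $d_{k+1}\geq 0$ to be essential precisely here: they make $k$ a point where the prefix sums $\sum_{i=1}^{\ell}d_i$ attain a local minimum, so the cut is placed where the two halves can be saturated independently and the deficit is not counted twice. This reconciliation, rather than any single invocation of the earlier bounds, is what I anticipate to be the crux.
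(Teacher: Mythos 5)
Your setup is sound as far as it goes: the identity $L_h(F)=L_h(F_1)+L_h(F_2)-2p$ for the horizontal cut is correct, as are the monotonicity relations $d_i^{(1)}\leq d_i$ and $d_{k+l}^{(2)}\geq d_{k+l}$ and the computation that each total discrepancy equals $\sigma$. But the proof has a genuine gap exactly where you flag it, and it is not a technicality --- it is the entire content of the theorem. The difficulty is that the split column sums $a_j$ depend on the particular image $F$, and nothing prevents the whole top discrepancy $\sum_{i\leq k}(d_i-d_i^{(1)})=\sigma$ from concentrating on the single index $i_{2t+1}$, which carries coefficient $+2$; then $A^{(1)}$ alone already falls short of $A$ by $2\sigma$. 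To recover the target loss of only $\sigma$ you would need the interface slack $2r_{k+1}-2p$ and the bottom surplus to give back at least $\sigma$, but you offer no mechanism linking the distribution of the $a_j$ (hence of the deficits among the indices) to the overlap count $p$ at rows $k,k+1$, and the hypotheses $d_k<0$, $d_{k+1}\geq 0$ do not obviously supply one: they constrain the line sums, not how a given image allocates its ones across the cut. As written, the argument only yields $L_h(F)\geq 2n+A+B-c\sigma$ for some constant $c\geq 2$, which is strictly weaker than \eqref{splits}.

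For comparison, the paper avoids this reconciliation entirely by inducting on $\sigma$ rather than cutting once. In the base case $\sigma=0$ the structure of $F$ is forced (every column with $c_j>k$ is full in rows $1,\ldots,k$ and every column with $c_j\leq k$ is empty in rows $k+1,\ldots,m$), so the image splits \emph{by columns} into two pieces whose $d$-values coincide exactly with the original $d_i$ --- no discrepancy to control --- and Theorem \ref{altgrens} applied to each piece gives \eqref{splits} on the nose. For $\sigma\geq 1$ one finds a column with a zero in the top block and a one in the bottom block, moves ones upward within that column to produce an image $F'$ with strictly smaller $\sigma'$, and checks case by case that $L_h$, $A_1$, $A_2$ and $\sigma$ change by compatible amounts; this is where $d_k<0$ and $d_{k+1}\geq 0$ are actually used (to force $i_{2t+1}\neq k$ and $\tilde i_1\neq 1$, and, together with \eqref{ryserconsequence}, to handle the residual case $i_{2t+1}=k-1$ directly from Theorem \ref{altgrens}). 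If you want to salvage your approach you would need to prove the interface lemma you describe; I do not see how to do it, and I would recommend the inductive route instead.
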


\begin{proof}
We will prove the theorem by induction on $\sigma$. Note that by \eqref{ryserconsequence} we have $\sigma \geq 0$, since the line sums are consistent.

As we are only considering the horizontal boundary, we may for convenience assume that $c_1 \geq c_2 \geq \ldots \geq c_n$.

Suppose $\sigma = 0$. Then
\[
\sum_{i=1}^k r_i = \sum_{i=1}^k b_i = \sum_{i=1}^k \#\{j: c_j \geq i\} = \sum_{j \mid c_j \leq k} c_j + \sum_{j \mid c_j>k} k.
\]
So in any column $j$ with $c_j > k$ we must have $(i,j) \in F$ for $1 \leq i \leq k$, and in any column $j$ with $c_j \leq k$ we must have $(i,j) \not\in F$ for $k+1 \leq i \leq m$. This means that we can split the image $F$ into four smaller images, one of which contains only ones and one of which contains only zeroes. The other two parts we call $F_1$ and $F_2$ (see Figure \ref{figinductiebasis}). In order to have images with the first row filled with ones and the last row filled with zeroes, we glue row $m$ to $F_1$ and row 1 to $F_2$. More precisely, let $F_1$ consist of rows $1, 2, \ldots, k-1, k$ and $m$ of $F$ and the columns $j$ with $c_j \leq k$; let $F_2$ consist of rows $1$ and $k+1, k+2, \ldots, m-1, m$ of $F$ and the columns $j$ with $c_j > k$.

\begin{figure}
\begin{center}
\includegraphics{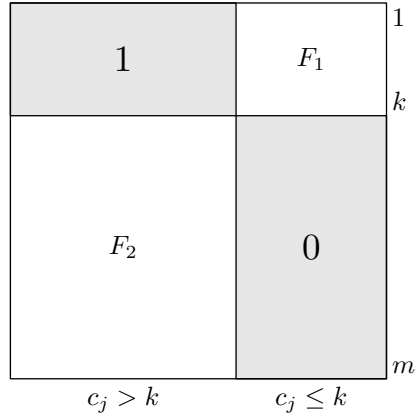}
\end{center}
\caption{Splitting the image $F$ into four smaller images.}
\label{figinductiebasis}
\end{figure}

The columns of $F$ with sum at most $k$ are exactly the columns with indices greater than $b_{k+1}$. Define $h = b_{k+1}$. Let $r_{1}^{(1)}$, $r_{2}^{(1)}, \ldots, r_k^{(1)}, r_{m}^{(1)}$ be the row sums of $F_1$, and let $r_{1}^{(2)}$, $r_{k+1}^{(2)}$, \ldots, $r_{m-1}^{(2)}$, $r_{m}^{(2)}$ be the row sums of $F_2$. We have
\[
r_i^{(1)} = r_i - h, \quad \text{for $1 \leq i \leq k$, and} \quad r_m^{(1)} = r_m,
\]
\[
r_i^{(2)} = r_i \quad \text{for $k+1 \leq i \leq m$, and} \quad r_1^{(2)} = h = r_1 - (n-h).
\]
Let $c_{h+1}^{(1)}$, $c_{h+2}^{(1)}$, \ldots, $c_{n-1}^{(1)}$, $c_n^{(1)}$ be the column sums of $F_1$, and let $c_1^{(2)}$, $c_2^{(2)}$, \ldots, $c_{h-1}^{(2)}$, $c_h^{(2)}$ be the column sums of $F_2$. We have
\[
c_j^{(1)} = c_j, \quad \text{and} \quad c_j^{(2)} = c_j - (k-1) \quad \text{for all $j$}.
\]
Define
\begin{align*}
b_1^{(1)} &= \# \{j \geq h+1 : c_j^{(1)} \geq 1 \}, & b_1^{(2)} &= \# \{j \leq h : c_j^{(2)} \geq 1 \},\\
b_2^{(1)} &= \# \{j \geq h+1 : c_j^{(1)} \geq 2 \}, & b_{k+1}^{(2)} &= \# \{j \leq h : c_j^{(2)} \geq 2 \},\\
&\vdots & &\vdots \\
b_k^{(1)} &= \# \{j \geq h+1 : c_j^{(1)} \geq k \}, & b_{m-1}^{(2)} &= \# \{j \leq h : c_j^{(2)} \geq m-k \},\\
b_m^{(1)} &= \# \{j \geq h+1 : c_j^{(1)} \geq k+1 \}, & b_m^{(2)} &= \# \{j \leq h : c_j^{(2)} \geq m-k+1 \}.\\
\end{align*}
For $1 \leq i \leq k$ we have
\[
b_i^{(1)} = \# \{j \geq h+1 : c_j^{(1)} \geq i \} = \# \{j \leq n : c_j \geq i\} - \# \{j \leq h : c_j \geq i\} = b_i - h.
\]
Also, $b_m^{(1)} = 0 = b_m$. For $k+1 \leq i \leq m$ we have
\[
b_{i}^{(2)} = \# \{j \leq h : c_j^{(2)} \geq i-k+1 \} = \# \{j \leq h : c_j \geq i \} \]\[= \# \{j \leq n : c_j \geq i\} - \#\{j \geq h+1 : c_j \geq i\} = b_i - 0 = b_i.
\]
Also, $b_1^{(2)} = h = b_1 - (n-h)$. Now define $d_i^{(1)} = b_i^{(1)} - r_i^{(1)}$ for $i \in \{1, 2, \ldots, k-1, k, m\}$ and $d_i^{(2)} = b_i^{(2)} - r_i^{(2)}$ for $i \in \{1, k+1, k+2, \ldots, m-1, m\}$. We find
\[
d_i^{(1)} = b_i - h - (r_i - h) = d_i, \quad \text{for $1 \leq i \leq k$,}
\]
\[
d_m^{(1)} = b_m - r_m = d_m,
\]
\[
d_i^{(2)} = b_i - r_i = d_i \quad \text{for $k+1 \leq i \leq m$}
\]
\[
d_1^{(2)} = b_1 - (n-h) - (r_1 - (n-h)) = d_1.
\]
All in all we conclude $d_i^{(1)} = d_i$ and $d_i^{(2)} = d_i$ for all $i$.

The total length of the horizontal boundary of $F$ in the columns $j$ with $c_j \leq k$ is exactly the same as the total length $L_h(F_1)$ of the horizontal boundary of $F_1$. The total length of the horizontal boundary of $F$ in the columns $j$ with $c_j > k$ is exactly the same as the total length $L_h(F_2)$ of the horizontal boundary of $F_2$. So $L_h(F) = L_h(F_1) + L_h(F_2)$. Note that $F_1$ has $n-b_{k+1}$ columns and $F_2$ has $b_{k+1}$ columns. By Theorem \ref{altgrens} applied to $F_1$ we know that for any integer $t \geq 0$ and any set $\{i_1, i_2, \ldots, i_{2t+1} \} \subset \{1, 2, \ldots, k-1, k, m\}$ with $i_1 < i_2 < \ldots < i_{2t+1}$ we have
\[
L_h(F_1) \geq 2(n-b_{k+1}) + d_{i_1} - d_{i_2} + d_{i_3} - \cdots - d_{i_{2t}} + 2d_{i_{2t+1}}.
\]
By the same theorem applied to $F_2$ we know that for any integer $t \geq 0$ and any set $\{\tilde i_1, \tilde i_2, \ldots, \tilde i_{2s+1}\} \subset \{1, k+1, k+2, \ldots, m-1, m\}$ with $\tilde i_1 < \tilde i_2 < \ldots < \tilde i_{2s+1}$ we have
\[
L_h(F_2) \geq 2b_{k+1} + d_{\tilde i_1} - d_{\tilde i_2} + d_{\tilde i_3} - \cdots - d_{\tilde i_{2s}} + 2d_{\tilde i_{2s+1}}.
\]
Adding these two results yields \eqref{splits}.

Now let $\sigma \geq 1$ and suppose that we have already proven the theorem for any image with $\sum_{i=1}^k d_i < \sigma$. Let
\begin{align*}
A_1 &= \max\{ d_{i_1} - d_{i_2} + d_{i_3} - \cdots - d_{i_{2t}} + 2d_{i_{2t+1}}\}, \\
A_2 &= \max\{ d_{\tilde i_1} - d_{\tilde i_2} + d_{\tilde i_3} - \cdots - d_{\tilde i_{2s}} + 2d_{\tilde i_{2s+1}} \},
\end{align*}
where the first maximum is taken over all integers $t \geq 0$ and sets $\{i_1, i_2, \ldots, i_{2t+1} \} \subset \{1, 2, \ldots, k-1, k, m\}$ with $i_1 < i_2 < \ldots < i_{2t+1}$, and the second maximum over all integers $s \geq 0$ and sets $\{\tilde i_1, \tilde i_2, \ldots, \tilde i_{2s+1}\} \subset \{1, k+1, k+2, \ldots, m-1, m\}$ with $\tilde i_1 < \tilde i_2 < \ldots < \tilde i_{2s+1}$. Furthermore, fix $i_1,  i_2, \ldots, i_{2t+1}$ and $\tilde i_1, \tilde i_2, \ldots, \tilde i_{2s+1}$ such that these maxima are attained.

Since $d_k<0$ by definition of $k$, and since $d_m = 0$, we have
\[
d_{i_1} - d_{i_2} + d_{i_3} - \cdots - d_{i_{2t}} + 2d_{k} < d_{i_1} - d_{i_2} + d_{i_3} - \cdots - d_{i_{2t}} + 2d_{m}.
\]
If $i_{2t+1} = k$, this would contradict the maximality of $A_1$, so we conclude
\begin{equation}\label{eqfact}
i_{2t+1} \neq k.
\end{equation}

We also know $d_{k+1} \geq 0$ by definition of $k$, and $d_1 = 0$. So if $s \geq 1$, then
\[
d_{1} - d_{k+1} + d_{\tilde i_3} - \cdots - d_{\tilde i_{2s}} + 2d_{\tilde i_{2s+1}} \leq d_{\tilde i_3} - \cdots - d_{\tilde i_{2s}} + 2d_{\tilde i_{2s+1}}.
\]
This means that if $s \geq 1$, we may assume without loss of generality that $(\tilde i_1, \tilde i_2) \neq (1, k+1)$. Also,
\[
d_{1} - d_{\tilde i_2} + d_{\tilde i_3} - \cdots - d_{\tilde i_{2s}} + 2d_{\tilde i_{2s+1}} \leq d_{k+1} - d_{\tilde i_2} + d_{\tilde i_3} - \cdots - d_{\tilde i_{2s}} + 2d_{\tilde i_{2s+1}}.
\]
This means that if $s \geq 1$ and $\tilde i_2 > k+1$, we may assume that $\tilde i_1 \neq 1$.
Finally,
\[
2 d_1 \leq 2d_{k+1},
\]
so if $s=1$ we may also assume that $\tilde i_1 \neq 1$.

All in all we may assume in all cases that
\begin{equation}\label{eqassume}
\tilde i_1 \neq 1.
\end{equation}

It suffices to prove
\begin{equation}\label{tebewijzen}
L_h(F) \geq 2n + A_1 + A_2 - \sigma.
\end{equation}

Let $j$ with $1 \leq j \leq n$ be such that $\# \big( \{(1,j), (2,j), \ldots, (k,j) \} \cap F \big) < \min(c_j, k)$, i.e. in column $j$ there is at least one one in rows $k+1, k+2, \ldots, m$ and at least one zero in rows $1, 2, \ldots, k$. Such a column exists, because
\[
\sum_{i=1}^k r_i < \sum_{i=1}^k b_i = \sum_{i=1}^k \#\{j: c_j \geq i\} = \sum_{j \mid c_j \leq k} c_j + \sum_{j \mid c_j>k} k.
\]

We will now consider various cases.

\textit{Case} 1. Suppose that there exist integers $l \geq 2$, $h \geq k+1$ and $u \geq 0$ such that $l+u \leq k$, $h+u \leq m-1$ and
\begin{itemize}
\item $(l-1, j) \in F$, and
\item $(l, j), (l+1, j), \ldots, (l+u, j) \not\in F$, and
\item $(h,j), (h+1, j), \ldots, (h+u,j) \in F$, and
\item $(h+u+1,j) \not\in F$, and
\item $(l+u+1, j) \in F$ or $(h-1,j) \not\in F$.
\end{itemize}

\begin{figure}
\begin{center}
\includegraphics{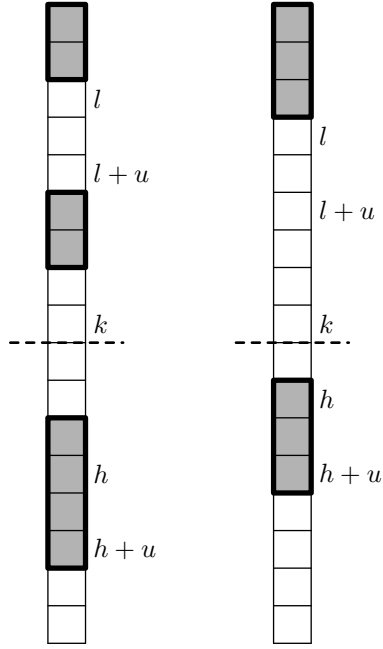}
\end{center}
\caption{Two possibilities for column $j$ in Case 1. The grey cells have value 1, the other cells value 0.}
\label{figcase1before}
\end{figure}

We define a new image $F'$ by moving the ones at $(h,j), (h+1,j), \ldots, (h+u,j)$ to $(l,j), (l+1,j), \ldots, (l+u,j)$; that is,
\[
F' = F \cup \{ (l,j), (l+1,j), \ldots, (l+u,j) \} \backslash \{ (h,j), (h+1,j), \ldots, (h+u,j) \}.
\]
The column sums of $F'$ are identical to the column sums of $F$. The row sums $r_i'$ of $F'$ are given by
\[
r_i' = \begin{cases} r_i + 1 & \text{if $l \leq i \leq l+u$}, \\ r_i - 1 & \text{if $h \leq i \leq h+u$}, \\ r_i & \text{else.} \end{cases}
\]
Define $d_i' = b_i - r_i'$ and $\sigma' = \sum_{i=1}^k d_i' = \sigma - (u+1)$. By the induction hypothesis, we have for the total length $L_h(F')$ of the horizontal boundary of $F'$
\[
L_h(F') \geq 2n + A_1' + A_2' - \sigma',
\]
where
\begin{align*}
A_1' &= d_{i_1}' - d_{i_2}' + d_{i_3}' - \cdots - d_{i_{2t}}' + 2d_{i_{2t+1}}',\\
A_2' &= d_{\tilde i_1}' - d_{\tilde i_2}' + d_{\tilde i_3}' - \cdots - d_{\tilde i_{2s}}' + 2d_{\tilde i_{2s+1}}'.
\end{align*}
By moving the $u+1$ ones in column $j$, the piece of horizontal boundary between row $l-1$ and row $l$ has vanished, just like the piece of horizontal boundary between row $h+u$ and $h+u+1$. If $(l+u+1,j) \in F$, the piece of horizontal boundary between row $l+u$ and row $l+u+1$ has also vanished, but there may be a new piece of horizontal boundary between row $h-1$ and $h$. On the other hand, if $(h-1,j) \not\in F$, the piece of horizontal boundary between row $h-1$ and row $h$ has vanished, but there may be a new piece of horizontal boundary between row $l+u$ and $l+u+1$. At least one of both is the case. All in all, we have $L_h(F') \leq L_h(F) - 2$.

\begin{figure}
\begin{center}
\includegraphics{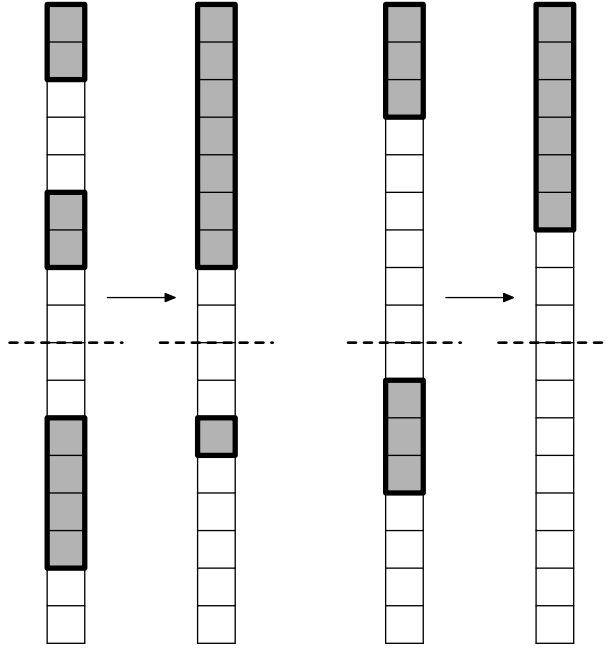}
\end{center}
\caption{Moving ones in Case 1, in both possible configurations. The grey cells have value 1, the other cells value 0.}
\label{figcase1after}
\end{figure}

Furthermore, some of the $d_i'$ involved in $A_1'$ or $A_2'$ may be different from the corresponding $d_i$. Since $\{i_1, i_2, \ldots, i_{2t+1} \} \subset \{1, 2, \ldots, k-1, k, m\}$, we have $d_i' = d_i$ or $d_i' = d_i-1$ for $i \in \{i_1, i_2, \ldots, i_{2t+1} \}$. The values of $i$ for which $d_i' = d_i-1$, are all consecutive. Since the coefficients for $d_i$ in $A_1$ are alternatingly positive and negative, and there is only one positive coefficient that is $+2$ rather than $+1$, we have
\[
A_1' = d_{i_1}' - d_{i_2}' + d_{i_3}' - \cdots - d_{i_{2t}}' + 2d_{i_{2t+1}}' \geq d_{i_1} - d_{i_2} + d_{i_3} - \cdots - d_{i_{2t}} + 2d_{i_{2t+1}} - 2 = A_1 - 2.
\]
Since $\{\tilde i_1, \tilde i_2, \ldots, \tilde i_{2s+1}\} \subset \{1, k+1, k+2, \ldots, m-1, m\}$, we have $d_i' = d_i$ or $d_i' = d_i + 1$ for $i \in \{\tilde i_1, \tilde i_2, \ldots, \tilde i_{2s+1}\}.$ By a similar argument as above and by the fact that all negative coefficients in $A_2$ are equal to $-1$, we have
\[
A_2' \geq A_2 - 1.
\]
Finally, we have $\sigma' = \sigma - (u+1) \leq \sigma - 1$. We conclude
\begin{align*}
L_h(F) &\geq L_h(F') + 2 \\ &\geq 2n + A_1' + A_2' - \sigma' + 2 \\ &\geq 2n + (A_1 -2) + (A_2 - 1) - (\sigma-1) +2 \\ &= 2n + A_1 + A_2 - \sigma.
\end{align*}
This proves \eqref{tebewijzen} in Case 1.

\textit{Case} 2. Suppose that the conditions of Case 1 do not hold and furthermore that $(k,j) \in F$ and $(k+1,j) \in F$. Then there exist integers $l \geq 2$, $h \leq k$ and $u \geq 0$ such that $h \geq l+1$, $k+1 \leq h+u \leq m-1$ and
\begin{itemize}
\item $(l-1, j) \in F$, and
\item $(l, j), (l+1, j), \ldots, (h-1, j) \not\in F$, and
\item $(h,j), (h+1, j), \ldots, (h+u,j) \in F$, and
\item $(h+u+1,j) \not\in F$.
\end{itemize}
As Case 1 does not apply, we cannot change all zeroes in $(l, j)$, $(l+1,j)$, \ldots, $(h-1,j)$ into ones by moving ones from $(k+1,j)$, $(k+2,j)$, \ldots, $(h+u,j)$. This implies that $h-l > (h+u)-k \geq 1$, so $l < h-1$. We will now distinguish between several cases.

\begin{figure}
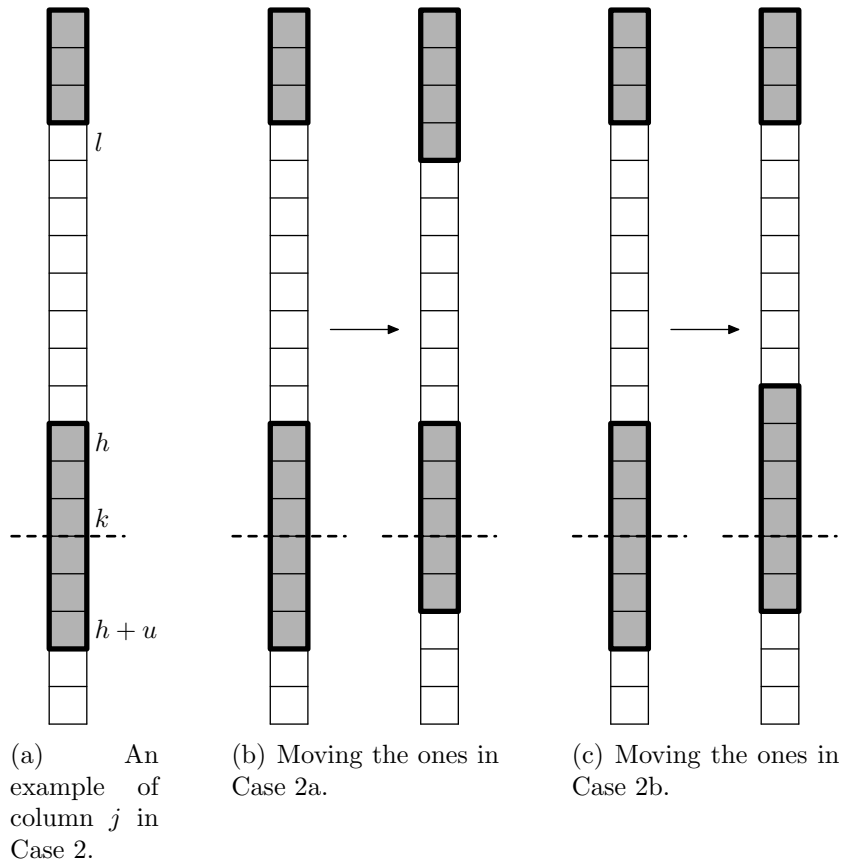

  \begin{center}
    \subfigure[An example of column $j$ in Case 2.]{\label{figcase2before}\includegraphics{plaatje.5}}
    \qquad
    \subfigure[Moving the ones in Case 2a.]{\label{figcase2a}\includegraphics{plaatje.8}}
    \qquad
    \subfigure[Moving the ones in Case 2b.]{\label{figcase2b}\includegraphics{plaatje.9}}
  \end{center}
\caption{Illustrations for Case 2 of the proof. The grey cells have value 1, the other cells value 0.}
\end{figure}

\textit{Case} 2a. Suppose that there does not exist an integer $r$ with $0 \leq r \leq t$ such that $l = i_{2r+1}$. We define a new image $F'$ by moving the one at $(h+u,j)$ to $(l,j)$; that is,
\[
F' = F \cup \{(l,j)\} \backslash \{(h+u,j)\}.
\]
We define $r_i'$, $d_i'$, $\sigma'$, $A_1'$, $A_2'$ and $L_h(F')$ similarly as in Case 1. As in Case 1 we have $A_2' \geq A_2 -1$. However, of the $d_i$ with $i \in \{1, 2, \ldots, k-1, k, m\}$ only one has changed (namely $d_l' = d_l - 1$), and we know that $d_l$ does not have a positive coefficient in $A_1$. So $A_1' \geq A_1$. Furthermore, $L_h(F') = L_h(F)$ and $\sigma' = \sigma - 1$. By applying the induction hypothesis to $F'$, we find
\begin{align*}
L_h(F) &= L_h(F') \\ &\geq 2n + A_1' + A_2' - \sigma' \\ &\geq 2n + A_1 + (A_2 - 1) - (\sigma-1) \\ &= 2n + A_1 + A_2 - \sigma.
\end{align*}
This proves \eqref{tebewijzen} in Case 2a.

\textit{Case} 2b. Suppose that there does not exist an integer $r$ with $0 \leq r \leq t$ such that $h-1 = i_{2r+1}$. We define a new image $F'$ by moving the one at $(h+u,j)$ to $(h-1,j)$; the rest of the proof is the same as in Case 2a.

\textit{Case} 2c. Suppose neither Case 2a nor Case 2b applies. Then there are integers $r_1$ and $r_2$ with $0 \leq r_1 < r_2 \leq t$ such that $l=i_{2r_1+1}$ and $h-1=i_{2r_2+1}$. Note that $r_1 < t$, so $d_l$ has coefficient $+1$ in $A_1$. Now let $v = i_{2r_1+2} < h-1$. Again, we distinguish between two cases.

\begin{figure}
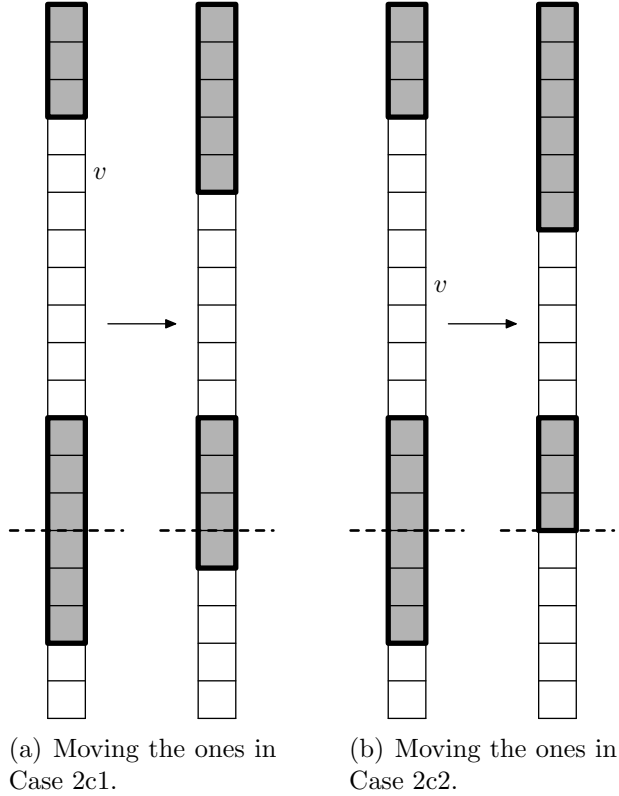

  \begin{center}
    \subfigure[Moving the ones in Case 2c1.]{\label{figcase2c1}\includegraphics{plaatje.10}}
    \qquad
    \subfigure[Moving the ones in Case 2c2.]{\label{figcase2c2}\includegraphics{plaatje.11}}
  \end{center}
\caption{More illustrations for Case 2 of the proof. The grey cells have value 1, the other cells value 0.}
\end{figure}

\textit{Case} 2c1. Suppose that $k+1 \leq h+u-v+l$. Then we define a new image $F'$ by moving the ones at $(h+u-v+l, j)$, $(h+u-v+l+1,j)$, \ldots, $(h+u,j)$ to $(l,j)$, $(l+1,j)$, \ldots, $(v,j)$; that is,
\[
F' = F \cup \{(l,j), (l+1,j), \ldots, (v,j)\} \backslash \{(h+u-v+l, j), (h+u-v+l+1,j), \ldots, (h+u,j) \}.
\]
We define $r_i'$, $d_i'$, $\sigma'$, $A_1'$, $A_2'$ and $L_h(F')$ similarly as in Case 1. As in Case 2a we have $A_2' \geq A_2 -1$ and $L_h(F') = L_h(F)$. Also, $\sigma' \leq \sigma - 1$. Furthermore, of the $d_i$ with $i \in \{1, 2, \ldots, k-1, k, m\}$ exactly two have changed: $d_l' = d_l -1$ and $d_v' = d_v - 1$. As $d_l$ has coefficient $+1$ in $A_1$ and $d_v$ has coefficient $-1$ in $A_1$, we have $A_1' = A_1$. By applying the induction hypothesis to $F'$, we find
\begin{align*}
L_h(F) &= L_h(F') \\ &\geq 2n + A_1' + A_2' - \sigma' \\ &\geq 2n + A_1 + (A_2 - 1) - (\sigma-1) \\ &= 2n + A_1 + A_2 - \sigma.
\end{align*}
This proves \eqref{tebewijzen} in Case 2c1.

\textit{Case} 2c2. Suppose that $k+1 > h+u-v+l$. Then we define a new image $F'$ by moving the ones at $(k+1, j)$, $(k+2,j)$, \ldots, $(h+u,j)$ to $(l,j)$, $(l+1,j)$, \ldots, $(l+h+u-k-1,j)$; that is,
\[
F' = F \cup \{(l,j), (l+1,j), \ldots, (l+h+u-k-1,j)\} \backslash \{(k+1, j), (k+2,j), \ldots, (h+u,j) \}.
\]
We define $r_i'$, $d_i'$, $\sigma'$, $A_1'$, $A_2'$ and $L_h(F')$ similarly as in Case 1. As in Case 2c1 we have $L_h(F') = L_h(F)$ and $\sigma' \leq \sigma - 1$. Since $l+h+u-k-1 < v$, of the $d_i$ with $i \in \{1, 2, \ldots, k-1, k, m\}$ exactly one has changed: $d_l' = d_l -1$. As $d_l$ has coefficient $+1$ in $A_1$, we have $A_1' = A_1-1$.

Now we consider $A_2'$. Some of the $d_i$ with $i \in \{\tilde i_1, \tilde i_2, \ldots, \tilde i_{2s+1}\}$ may have increased by 1. If $\tilde i_1 > h+u$, none of the row indices $k+1$, $k+2$, \ldots, $h+u$ occurs in $\{\tilde i_1, \tilde i_2, \ldots, \tilde i_{2s+1}\}$, and we have $A_2' = A_2$. If not, then $k+1 \leq \tilde i_1 \leq h+u$ (using \eqref{eqassume}). The values of $i$ for which $d_i' = d_i+1$, are all consecutive. Since the coefficients for $d_i$ in $A_1$ are alternatingly positive and negative, and since $\tilde i_1$ (which has a positive coefficient in $A_1$) is included in $\{k+1, k+2, \ldots, h+u\}$, we have $A_2' \geq A_2$.

By applying the induction hypothesis to $F'$, we find
\begin{align*}
L_h(F) &= L_h(F') \\ &\geq 2n + A_1' + A_2' - \sigma' \\ &\geq 2n + (A_1-1) + A_2 - (\sigma-1) \\ &= 2n + A_1 + A_2 - \sigma.
\end{align*}
This proves \eqref{tebewijzen} in Case 2c2, which completes the proof of Case 2.

\textit{Case} 3. Suppose that the conditions of Case 1 and Case 2 do not hold. By definition of $j$ we know that in column $j$ there is at least one one in rows $k+1$, $k+2$, \ldots, $m$. As Case 2 does not apply, we have $(k,j) \notin F$ or $(k+1,j) \not\in F$. If $(k,j) \in F$ (so $(k+1, j) \not\in F$) we can apply Case 1: let $l$ be the smallest integer such that $(l,j)\not\in F$, let $h'$ be the greatest integer such that $(h',j) \in F$, and let $u$ be maximal such that $(i,j) \not\in F$ for $l \leq i \leq l+u$ and $(i,j) \in F$ for $h'-u \leq i \leq h'$. Define $h = h'-u$. Since $(k,j) \in F$ and $(k+1,j) \not\in F$, we have $l+u < k$ and $h > k+1$, so all conditions of Case 1 are satisfied.

Hence we have $(k,j) \not\in F$. Now there exist integers $h \geq k+1$ and $u \geq 0$ such that $h+u \leq m-1$ and
\begin{itemize}
\item $(h-1,j) \not\in F$, and
\item $(i,j) \in F$ for $h \leq i \leq h+u$, and
\item $(h+u+1,j) \not\in F$.
\end{itemize}
Furthermore, let $l \leq k$ be such that $(l-1,j)\in F$ and $(l,j) \not\in F$. Since Case 1 does not apply, there does not exist an integer $u'$ such that $l+u' \leq k$, $(i,j) \not\in F$ for $l \leq i \leq l+u'$ and $(l+u'+1, j) \in F$. This means that $(i,j) \not\in F$ for all $i$ with $l \leq i \leq k+1$. Also, we could still apply Case 1 if there are at least as many zeroes in $(l,j)$, $(l+1,j)$, \ldots $(k,j)$ as there are ones in $(h,j)$, $(h+1, j)$, \ldots, $(h+u,j)$. Hence we must have $u+1 > k-l+1$.

\begin{figure}
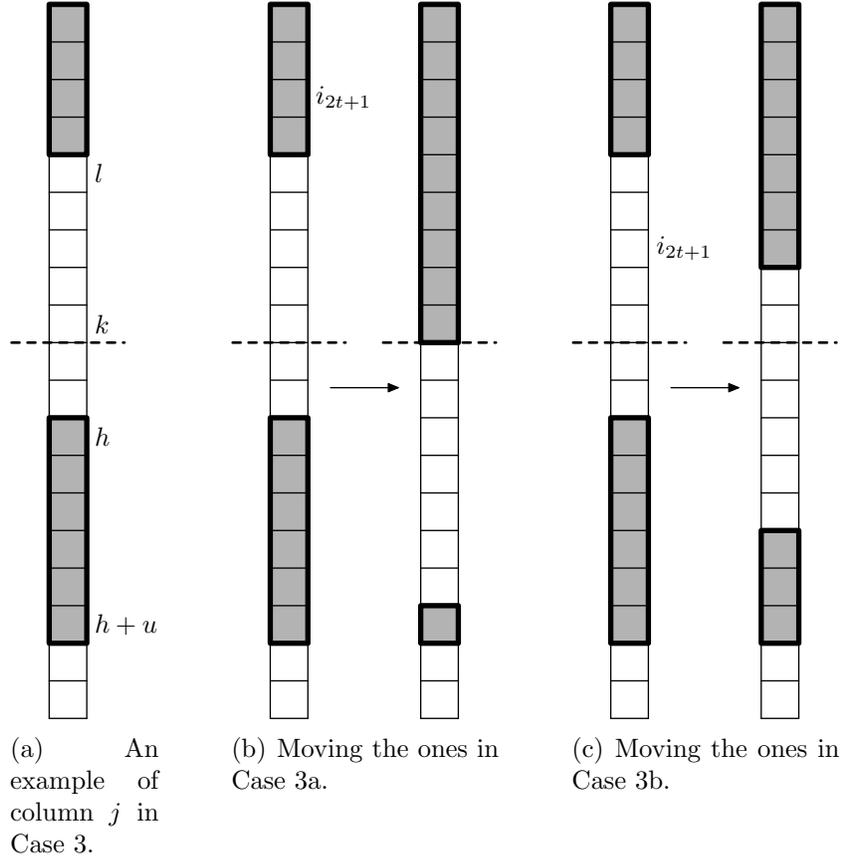

  \begin{center}
    \subfigure[An example of column $j$ in Case 3.]{\label{figcase3before}\includegraphics{plaatje.6}}
    \qquad
    \subfigure[Moving the ones in Case 3a.]{\label{figcase3a}\includegraphics{plaatje.12}}
    \qquad
    \subfigure[Moving the ones in Case 3b.]{\label{figcase3b}\includegraphics{plaatje.13}}
  \end{center}
\caption{Illustrations for Case 3 of the proof. The grey cells have value 1, the other cells value 0.}
\end{figure}

We will distinguish between various cases.

\textit{Case} 3a. Suppose that either $i_{2t+1} < l$ or $i_{2t+1} = m$. This means that none of the $d_i$ with $l \leq i \leq k$ has coefficient $+2$ in $A_1$. Since $u+1 > k-l+1$, we have $h+k-l < h+u$, so there are ones at $(h,j)$, $(h+1,j)$, \ldots, $(h+k-l,j)$. We define a new image $F'$ by moving those ones to $(l,j)$, $(l+1,j)$, \ldots, $(k,j)$; that is
\[
F' = F \cup \{(l,j), (l+1,j), \ldots, (k,j)\} \backslash \{(h,j), (h+1,j), \ldots, (h+k-l,j)\}.
\]
We define $r_i'$, $d_i'$, $\sigma'$, $A_1'$, $A_2'$ and $L_h(F')$ similarly as in Case 1. As in Case 1 we have $A_2' \geq A_2-1$. Furthermore, $L_h(F') = L_h(F)$.

Suppose $l=k$. Then only one $d_i$ with $i \in \{1, 2, \ldots, k-1, k, m\}$ has changed, namely $d_k' = d_k - 1$. We know that $d_k$ does not have a positive coefficient in $A_1$, since $k \neq i_{2t+1}$ (see \eqref{eqfact}) and $i_{2t-1} \leq k-1$. So $A_1' \geq A_1$. Also, $\sigma' = \sigma - 1$, so by applying the induction hypothesis to $F'$, we find
\begin{align*}
L_h(F) &= L_h(F') \\ &\geq 2n + A_1' + A_2' - \sigma' \\ &\geq 2n + A_1 + (A_2-1) - (\sigma -1) \\ &= 2n + A_1 + A_2 - \sigma.
\end{align*}

Now suppose that $l<k$. Then we have $\sigma' \leq \sigma - 2$. Furthermore, none of the $d_i$ with $l \leq i \leq k$ has coefficient $+2$ in $A_1$, so $A_1' \geq A_1-1$. By applying the induction hypothesis to $F'$, we find
\begin{align*}
L_h(F) &= L_h(F') \\ &\geq 2n + A_1' + A_2' - \sigma' \\ &\geq 2n + (A_1-1) + (A_2-1) - (\sigma -2) \\ &= 2n + A_1 + A_2 - \sigma.
\end{align*}
This proves \eqref{tebewijzen} in Case 3a.

\textit{Case} 3b. Suppose that $i_{2t+1} \geq l$, $i_{2t+1} \neq m$ and $i_{2t+1} \neq k-1$. Using \eqref{eqfact}, we then have $l \leq i_{2t+1} \leq k-2$. Since $u+1 > k-l+1$, we find that $u \geq k-l+1 \geq (l+2) -l +1 \geq 3$. We define a new image $F'$ by moving the ones at $(h,j)$, $(h+1,j)$ and $(h+2,j)$ to $(l,j)$, $(l+1,j)$ and $(l+2,j)$; that is,
\[
F' = F \cup \{(l,j), (l+1,j), (l+2,j)\} \backslash \{(h,j), (h+1,j), (h+2,j)\}.
\]
We define $r_i'$, $d_i'$, $\sigma'$, $A_1'$, $A_2'$ and $L_h(F')$ similarly as in Case 1. As in Case 1, we have $A_1' \geq A_1 -2$ and $A_2' \geq A_2 - 1$. Furthermore, $L_h(F') = L_h(F)$ and $\sigma' = \sigma-3$. By applying the induction hypothesis to $F'$, we find
\begin{align*}
L_h(F) &= L_h(F') \\ &\geq 2n + A_1' + A_2' - \sigma' \\ &\geq 2n + (A_1-2) + (A_2-1) - (\sigma -3) \\ &= 2n + A_1 + A_2 - \sigma.
\end{align*}
This proves \eqref{tebewijzen} in Case 3b.

\textit{Case} 3c. Suppose that neither Case 3a nor Case 3b applies. Then we have $i_{2t+1} = k-1$. Using \eqref{eqassume}, this means that $\tilde i_1 \geq k+1 > k-1 = i_{2t+1}$. We now apply Theorem \ref{altgrens} to the image $F$ and the row indices $\{i_1, i_2, \ldots, i_{2t}, k-1, k, \tilde i_1, \tilde i_2, \ldots, \tilde i_{2s+1}\}$:
\begin{align*}
L_h(F) &\geq 2n + d_{i_1} - d_{i_2} + \cdots - d_{i_{2t}} + d_{k-1} - d_k + d_{\tilde i_1} - d_{\tilde i_2} + \cdots - d_{\tilde i_{2s}} + 2 d_{\tilde i_{2s+1}} \\
&= 2n + A_1 - d_{k-1} - d_k + A_2.
\end{align*}

By Ryser's Theorem \cite{ryser} we have $\sum_{i=1}^{k-2} d_i \geq 0$, since the line sums are consistent, so
\[
\sigma = \sum_{i=1}^k d_i = \sum_{i=1}^{k-2} d_i + d_{k-1} + d_k \geq d_{k-1} + d_k.
\]
Hence
\[
L_h(F) \geq 2n+A_1 - d_{k-1} - d_k + A_2 \geq 2n + A_1 + A_2 - \sigma,
\]
which proves \eqref{tebewijzen} in Case 3c.

This finishes the proof of the theorem.
\end{proof}

\begin{example}\label{exsplitsing}
Let $m=n=12$ and let row sums $(12, 8, 9, 8, 8, 5, 5, 2, 3, 2, 1, 0)$ and column sums $(10, 8, 8, 8, 6, 6, 6, 3, 2, 2, 2, 2)$ be given. We compute $b_i$ and $d_i$, $i=1, 2, \ldots, 12$ as shown below.

\begin{tabular}{c|*{12}{c}}
$i$   & 1  & 2 & 3 & 4 & 5 & 6 & 7 & 8 & 9 & 10 & 11 & 12\\
\hline
$b_i$ & 12 & 12 & 8 & 7 & 7 & 7 & 4 & 4 & 1 & 1 & 0 & 0 \\
$r_i$ & 12 & 8 & 9 & 8 & 8 & 5 & 5 & 2 & 3 & 2 & 1 & 0 \\
\hline
$d_i$ & $0$ & $+4$ & $-1$ & $-1$ & $-1$ & $+2$ & $-1$ & $+2$ & $-2$ & $-1$ & $-1$ & $0$
\end{tabular}

Here \eqref{altgrens1} yields at most
\[
L_h \geq 24 + 4 - (-1) + 2 - (-1) + 2 \cdot 2 = 36,
\]
and \eqref{altgrens2} yields at most
\[
L_h \geq 24 - (-2) + 2 - (-1) + 2 - (-1) + 4 - 2 \cdot 0 = 36.
\]
However, we can apply Theorem \ref{thmsplits} with $k=5$ (note that $d_5 < 0$ and $d_6 \geq 0$). We have $\sigma = 1$. If we take $t=0$, $s=0$, $i_1 = 2$, $\tilde i_1 = 6$, $\tilde i_2 = 7$ and $\tilde i_3 = 8$, then we find
\[
L_h \geq 24 + 2 \cdot 4 + 2 - (-1) + 2\cdot 2 - 1 = 38.
\]
So in this example, Theorem \ref{thmsplits} gives a better bound than Theorem \ref{altgrens}. In fact, the bound of Theorem \ref{thmsplits} is sharp in this example: in Figure \ref{figexsplitsing} a binary image $F$ with the given row and column sums is shown, for which $L_h = 38$.
\end{example}

\begin{figure}
\begin{center}
\includegraphics{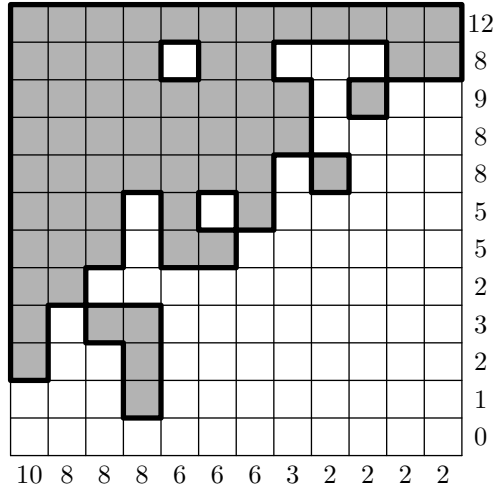}
\end{center}
\caption{The binary image from Examples \ref{exsplitsing}. The grey cells have value 1, the other cells value 0. The numbers indicate the row and column sums. The length of the horizontal boundary of this image is 38.}
\label{figexsplitsing}
\end{figure}

\begin{corollary}\label{corollsplits}
Let row sums $\mathcal{R} = (r_1, r_2, \ldots, r_m)$ and column sums $\mathcal{C} = (c_1, c_2, \ldots, c_n)$ be given. Suppose there exists an image $F$ with line sums $(\mathcal{R}, \mathcal{C})$ and let $L_h(F)$ be the total length of the horizontal boundary of this image. Define $b_i = \#\{j: c_j \geq i\}$ and $d_i = b_i - r_i$ for $i = 1, 2, \ldots, m$. Also set $d_0 = d_{m+1} = 0$. Let $k$ be an integer with $1 \leq k \leq m$ such that $d_k < 0$ and $d_{k+1} \geq 0$. Let $\sigma = \sum_{i=1}^k d_k$. For any integers $t, s \geq 0$ and any sets $\{i_1, i_2, \ldots, i_{2t+1} \} \subset \{0, 1, \ldots, k-1, k, m+1\}$ with $i_1 < i_2 < \ldots < i_{2t+1}$ and $\{\tilde i_1, \tilde i_2, \ldots, \tilde i_{2s+1}\} \subset \{0, k+1, k+2, \ldots, m, m+1\}$ with $\tilde i_1 < \tilde i_2 < \ldots < \tilde i_{2s+1}$ we have
\begin{align}
L_h(F) \geq 2r_1 &+ d_{i_1} - d_{i_2} + d_{i_3} - \cdots - d_{i_{2t}} + 2d_{i_{2t+1}} \notag \\
 &+ d_{\tilde i_1} - d_{\tilde i_2} + d_{\tilde i_3} - \cdots - d_{\tilde i_{2s}} + 2d_{\tilde i_{2s+1}} - \sigma.
\end{align}
\end{corollary}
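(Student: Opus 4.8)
The plan is to imitate the proof of Corollary~\ref{corollaltgrens} almost verbatim, only replacing the appeal to Theorem~\ref{altgrens} by an appeal to Theorem~\ref{thmsplits}. Starting from an image $F$ with line sums $(\mathcal{R},\mathcal{C})$ and horizontal boundary length $L_h$, I would build $F'$ by gluing a row of all ones above row~$1$ and a row of all zeroes below row~$m$. As in the earlier corollary this gives an image on $m+2$ rows with $r_1'=n$ and $r_{m+2}'=0$, with column sums $c_j'=c_j+1$, with $L_h'=L_h+2(n-r_1)$, and, crucially, with $d_i'=d_{i-1}$ for the shifted difference sequence, where $d_1'=d_0=0$ and $d_{m+2}'=d_{m+1}=0$.

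The key bookkeeping step is to check that the split parameter transports correctly. I would set $k'=k+1$ for $F'$. Then $d_{k'}'=d_k<0$ and $d_{k'+1}'=d_{k+1}\geq 0$, so $k'$ is a legitimate split index for $F'$; moreover $\sigma'=\sum_{i=1}^{k'}d_i'=d_0+\sum_{i=1}^{k}d_i=\sigma$, so the additive correction term is unchanged. The range requirement $2\leq k'\leq (m+2)-1$ demanded by Theorem~\ref{thmsplits} is exactly $1\leq k\leq m$, which is the relaxed hypothesis stated in the corollary, so this is precisely where the extra allowance $k\in\{1,m\}$ is bought.

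Next I would shift both chosen index sets up by one. Under $i\mapsto i+1$ the set $\{0,1,\ldots,k,m+1\}$ becomes $\{1,2,\ldots,k+1,m+2\}$ and $\{0,k+1,\ldots,m,m+1\}$ becomes $\{1,k+2,\ldots,m+1,m+2\}$, which are exactly the admissible index sets for $F'$ relative to $k'=k+1$. Applying Theorem~\ref{thmsplits} to $F'$ and using $d_{i+1}'=d_i$ together with $\sigma'=\sigma$ turns the bound into $L_h'\geq 2n+(\cdots)-\sigma$ with the sums expressed in the original $d_i$. Finally I would substitute $L_h'=L_h+2(n-r_1)$ and cancel, yielding the claimed bound with $2r_1$ in place of $2n$.

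I expect no genuine obstacle here: the argument is essentially a change of variables. The only points that need care are confirming that the hypotheses of Theorem~\ref{thmsplits} actually hold for $F'$ — in particular that the padded image still satisfies $d_{m+2}'=0$, which follows because every column of $F'$ has a one in row~$1$ and a zero in row~$m+2$, forcing $c_j'\leq m+1$ — and matching the index-set boundaries after the shift. Both are routine verifications analogous to those already carried out in Corollary~\ref{corollaltgrens}.
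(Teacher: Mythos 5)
Your proposal is correct and is exactly the argument the paper intends: the paper's proof of this corollary is literally ``completely analogous to the proof of Corollary \ref{corollaltgrens}'', and you have carried out that analogy, including the two points that actually need checking (that $k'=k+1$ satisfies $2\leq k'\leq (m+2)-1$ precisely when $1\leq k\leq m$, and that $\sigma'=\sigma$ because $d_0=0$). Nothing is missing.
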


\begin{proof}
Completely analogous to the proof of Corollary \ref{corollaltgrens}.
\end{proof}

\end{document}